\documentclass[11pt]{article}

\usepackage[margin=1in]{geometry}
\usepackage{microtype}

\usepackage{amsmath,amssymb,amsthm,mathtools}

\usepackage{array}
\usepackage{booktabs}
\usepackage{float}
\usepackage{graphicx}
\usepackage{tikz}
\usetikzlibrary{arrows.meta,calc,positioning}

\usepackage[hidelinks]{hyperref}

\newtheorem{theorem}{Theorem}[section]
\newtheorem{proposition}[theorem]{Proposition}
\newtheorem{corollary}[theorem]{Corollary}
\newtheorem{lemma}[theorem]{Lemma}
\theoremstyle{definition}
\newtheorem{definition}[theorem]{Definition}
\newtheorem{remark}[theorem]{Remark}

\newcommand{\Z}{\mathbb{Z}}
\newcommand{\Conj}{\operatorname{Conj}}
\newcommand{\HomQnd}{\operatorname{Hom}_{\mathrm{Qnd}}}
\newcommand{\HomZ}{\operatorname{Hom}_{\Z}}
\newcommand{\rowspan}{\operatorname{rowspan}}

\title{Signature Modules and the Dihedral\\
Conjugation-Quandle Counting Invariant}
\author{Zining Fan\\Emmaus High School}
\date{July 2026}

\begin{document}

\maketitle

\begin{abstract}
We use reflection and rotation signatures of dihedral groups to develop a Smith normal form method for computing the conjugation quandle colorings across all dihedral groups. Let $L=L_1\cup\cdots\cup L_\ell$ be a link with link components $L_1,\ldots,L_\ell$, and let $\Conj(D_n)$ be the conjugation quandle of the dihedral group $D_n$. We use the semidirect-product structure $D_n\cong\Z_n\rtimes\Z_2$, to separate the coloring into a component signature. This component signature tells us whether each link component
is colored by rotations or reflections, and the exponents
are assigned separately under modulus $n$. Once the component signature is determined, every crossing relation becomes an equation that determines an integer matrix $M_\tau$.

The abelian group $A_\tau(L)$, which we call the signature module, is represented by an integer matrix $M_\tau$. We prove that the free rank and nonunit Smith normal form entries, or equivalently the isomorphism class of $A_\tau(L)$, are preserved under the Reidemeister moves. The complete family of signature modules also determines the entire family of counting invariants for all $D_n$. The all-reflection system is the same as the Fox--$n$ colorings, while the mixed rotation-reflection signatures contain information that gives a stronger invariant. We also show that each signature module is the $t_k=(-1)^{\tau_k}$ specialization of the multivariable
Alexander module, and extend the coloring interpretation of the module to generalized dihedral groups $\operatorname{Dih}(B)$
for every abelian group $B$.
\end{abstract}

\noindent\textbf{Keywords:} conjugation quandles; dihedral group; quandle colorings; Smith normal form; signature modules; link invariants; Reidemeister invariance.

\section{Introduction}

A knot is an embedding of $S^1$ into $\mathbb{R}^3$, or a circle into 3-dimensional space. Knots are considered equivalent up to ambient isotopy. A collection of disjoint embedded circles is considered a link. Knots and links are represented as projections onto $\mathbb{R}^2$ with a knot diagram. Two diagrams represent the same link if and only if they are related by a finite sequence of Reidemeister moves~\cite[pp.~1--4]{lickorish}.

A knot or link invariant is a quantity assigned to the knot or link that remains unchanged under Reidemeister moves. The fundamental quandle is one example of link invariants \cite{joyce,matveev}. The fundamental quandle is a complete knot invariant up to mirror image~\cite[pp.~103--104]{kamada2002}, however it is very difficult to compute. A more computable invariant is obtained by choosing a finite quandle $T$ and counting homomorphisms~\cite[p.~104]{kamada2002}
\[
\left|\operatorname{Hom}(Q(L),T)\right|.
\]

This paper explores the conjugation quandle over the full family of dihedral groups. Every element of $D_n$ is either a rotation $r^x$ or a reflection $sr^x$~\cite[Section~1.2]{dummitfoote}. This separates the colorings into a binary component signature of a reflection or rotation, and an exponent system with a modulus $n$. Once the signature is determined, we can use a matrix to record the crossings of the knot or link. We then study this matrix using Smith normal form.

Smith normal forms of Fox~\cite[Section~2.1]{ge} and linear Alexander coloring matrices~\cite[Sections~1 and~2.1]{kauffman} are already used in knot theory. In the Fox coloring, the nonunit Smith normal form of the coloring matrix is a link invariant~\cite[Section~2.1]{ge}. The distinction between the reflection-rotation signature and the Fox-$n$ coloring is that the reflection-rotation signature contains more information about the link. The all-reflection signature recovers the Fox colorings, but links with multiple components also contain mixed signatures not covered under the Fox colorings~\cite[Remark~2 and Definition~1]{IIMS2025}.

Rotation--reflection coloring relations were studied in
\cite{IIMS2025}, and the multivariate Alexander colorings and
their specializations were developed in \cite{traldi2018}.
However, \cite{IIMS2025} focuses on two-tone colorability and
surjective dihedral representations, while this paper gives
an explicit connection between dihedral conjugation-quandle
colorings and multivariate Alexander colorings. This paper
builds on the results of the former papers by Traldi and IIMS,
and determines the dihedral conjugation-quandle counting
invariants for all moduli at once. In this paper, we organize
the crossing relations into a family of signature modules
$A_\tau(L)$ based on rotation-reflection signatures and give
an integer-matrix proof that their isomorphism classes stay
invariant under Reidemeister moves. Their Smith normal forms
then determine the $\operatorname{Conj}(D_n)$ counting invariant
for every $n\geq3$, without needing to know the result of each
modulus independently.

We also prove that the full family of signature modules is
stronger than the Fox coloring counts. We show that it
determines the number of Fox $n$-colorings for every $n$, and
give a specific example where the full family of signature
modules is able to detect the difference between two links,
but the Fox coloring counts are not. We show that the
signature modules are the $t_k=(-1)^{\tau_k}$ specializations
of the multivariable Alexander module and extend the coloring
interpretation to generalized dihedral groups
$\operatorname{Dih}(B)$ for every abelian group $B$.

The main contribution of this paper is the family of signature modules $A_\tau(L)$. We prove that each module does not depend on the chosen diagram. We also demonstrate that the complete family determines the $\Conj(D_n)$ counting invariant for every dihedral group at the same time. Thus, this result is not simply a faster way to count solutions for the conjugation quandle; it is a knot invariant for all counts of the homomorphisms of the conjugation quandle over any dihedral group.

\section{Quandles and Link Colorings}

\begin{definition}
A quandle~\cite[p.~105]{kamada2002} is a set $X$ with a binary operation
\[
\triangleright:X\times X\longrightarrow X
\]
with the following axioms:
\begin{enumerate}
    \item[(i)] $x\triangleright x=x$ for every $x\in X$;
    \item[(ii)] for each $y\in X$, the map $S_y(x)=x\triangleright y$ is a bijection;
    \item[(iii)] $(x\triangleright y)\triangleright z=(x\triangleright z)\triangleright(y\triangleright z)$ for all $x,y,z\in X$.
\end{enumerate}
\end{definition}

The fundamental quandle $Q(L)$ of a link $L$ is formed by the arcs and crossings of a link diagram~\cite[Sections~1--3]{kamada2002}. A quandle relation occurs at every crossing, and every arc in the link is colored by a quandle element. Let us suppose that $T$ is a finite quandle, then a $T$-coloring of $L$ is a quandle homomorphism $Q(L)\to T$, and the quandle counting invariant is
\[
\left|\operatorname{Hom}_{\mathrm{Qnd}}(Q(L),T)\right|.
\]
Background on quandles and counting invariants can be found in \cite[Chapters~3--4]{elhamdadi}.

For a given group $G$, the conjugation quandle~\cite[p.~105]{kamada2002} is the set $G$ with the operation
\[
x\triangleright y=y^{-1}xy.
\]

\begin{definition}
At each crossing, we set $a$ as the over-arc and order the two under-arcs as $b,c$ which makes the quandle relation~\cite[pp.~103--105]{kamada2002}
\[
c=b\triangleright a=a^{-1}ba.
\]
We call $b$ the source under-arc and $c$ the target under-arc of the relation.
\end{definition}

For the forward quandle, the source under-arc and target under arc are oriented so that
\[
\text{target}=\text{source}\triangleright\text{overarc}.
\]
For the inverse crossing which corresponds to the inverse quandle, their roles are reversed. If we always label the under-arcs geometrically as $b_{\mathrm{in}}$ and $b_{\mathrm{out}}$, then one crossing type uses
\[
b_{\mathrm{out}}=b_{\mathrm{in}}\triangleright a,
\]
on the other hand, the opposite type uses
\[
b_{\mathrm{out}}=b_{\mathrm{in}}\triangleright^{-1}a.
\]
Ordering to make sure that each arc is in the forward relation allows the same quandle to be used at every crossing.

\section{The Dihedral Group and Component Signatures}
\label{sec:dihedral-group}

For the remainder of the paper, let $n\geq3$ and let $D_n$ represent the dihedral group with the order $2n$:
\[
D_n=\langle r,s\mid r^n=e,\ s^2=e,\ srs=r^{-1}\rangle.
\]
Every element can then be written as
\[
s^\epsilon r^x,
\qquad
\epsilon\in\{0,1\},
\qquad
x\in\Z/n\Z,
\]
where $\epsilon=0$ represents a rotation and $\epsilon=1$ represents a reflection. This means that we can use the presentation~\cite[Sections~1.2 and~2.3.2]{Daugulis2017}
\[
D_n\cong\Z_n\rtimes\Z_2.
\]
The allowed multiplication operations are
\[
r^ir^j=r^{i+j},
\qquad
sr^ir^j=sr^{i+j},
\qquad
r^isr^j=sr^{j-i},
\qquad
sr^isr^j=r^{j-i},
\]
and the inverse rules are
\[
(r^i)^{-1}=r^{-i},
\qquad
(sr^i)^{-1}=sr^i.
\]

We can let the source under-arc be colored by $s^\epsilon r^x$ and the over-arc by $s^\delta r^y$. Calculation using the allowed multiplication operations gives that~\cite[Lemma~2.1]{IIMS2025}
\begin{equation}
(\epsilon,x)\triangleright(\delta,y)
=
\left(\epsilon,(-1)^\delta x+2\epsilon y\right).
\label{eq:dihedral-action}
\end{equation}
The binary component $\epsilon$ does not change under these algebraic operations.

Let $T_2=\{0,1\}$ be the trivial quandle, with operation $\epsilon\triangleright\delta=\epsilon$~\cite[p.~105]{kamada2002}. Equation~\eqref{eq:dihedral-action} shows that
\[
\pi:\Conj(D_n)\longrightarrow T_2,
\qquad
\pi(s^\epsilon r^x)=\epsilon,
\]
is a quandle homomorphism.

\begin{definition}
Let $L=L_1\cup\cdots\cup L_\ell$ be an oriented link. The component signature is a vector
\[
\tau=(\tau_1,\ldots,\tau_\ell)\in\{0,1\}^\ell,
\]
where $\tau_j=0$ means that the $j$th component is colored by rotations and $\tau_j=1$ means that it is colored by reflections.
\end{definition}

\begin{proposition}
The rotation-reflection type of a $\Conj(D_n)$-coloring is unchanged on each link component~\cite[Lemma~2.1(1) and Remark~4]{IIMS2025}. Additionally, every $\tau\in\{0,1\}^\ell$ occurs, so an $\ell$-component link has exactly $2^\ell$ possible component signatures.
\end{proposition}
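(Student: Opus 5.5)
The statement has two parts: constancy of the type along each component, and realizability of every signature. I will deduce the first from the homomorphism $\pi:\Conj(D_n)\to T_2$ and settle the second with an explicit coloring.

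\textbf{Constancy on components.} Fix a $\Conj(D_n)$-coloring $f:Q(L)\to\Conj(D_n)$ and compose with $\pi$ to get a $T_2$-coloring $\pi\circ f$. By equation~\eqref{eq:dihedral-action}, at every crossing with over-arc $a$ and under-arcs $b,c=b\triangleright a$, we have $\pi(f(c))=\pi(f(b))\triangleright\pi(f(a))=\pi(f(b))$. So the binary label is the same on the two under-arcs meeting at each crossing. The inverse crossings are covered as well, since $c=b\triangleright a$ is equivalent to $b=c\triangleright^{-1}a$ and $S_a$ is a bijection.

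Walking along a component $L_j$, consecutive arcs are exactly the two under-arcs at some crossing. By induction on the number of crossings passed, $\pi\circ f$ is therefore constant on all arcs of $L_j$. A component with no undercrossings is a single arc and needs no argument. This constant is the well-defined value $\tau_j$.

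\textbf{Every signature occurs.} Given $\tau\in\{0,1\}^\ell$, color every arc of $L_j$ by $s^{\tau_j}r^0$, that is, exponent $x=0$ throughout. I will check the crossing relation with \eqref{eq:dihedral-action}. With source $(\epsilon,0)$ and over-arc $(\delta,0)$, the result is $(\epsilon,(-1)^\delta\cdot 0+2\epsilon\cdot 0)=(\epsilon,0)$. This equals the target's color, because source and target lie on the same component and hence carry the same $\epsilon$.

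So this assignment satisfies every crossing relation. It gives a quandle homomorphism $Q(L)\to\Conj(D_n)$ with signature $\tau$, since $Q(L)$ is presented by arcs subject to the crossing relations. Distinct $\tau$ give distinct signatures, and every coloring has some signature in $\{0,1\}^\ell$. Hence there are exactly $2^\ell$ possible signatures.

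\textbf{Main obstacle.} The only delicate point is the combinatorial fact that consecutive arcs along a component are precisely the source/target pairs at undercrossings, independent of the crossing sign and orientation conventions. I would handle this by appealing to the definition of arcs in the diagram together with the orientation convention fixed in the preceding section.
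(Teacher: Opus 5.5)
Your proposal is correct and follows essentially the same route as the paper. Both arguments push the coloring through $\pi$ to the trivial quandle $T_2$, so the type agrees on the two under-arcs at every crossing and therefore propagates along each component. Both then realize each $\tau$ by the exponent-zero coloring, using $e$ on rotation components and $s$ on reflection components.
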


\begin{proof}
Let $f:Q(L)\to\Conj(D_n)$ be a coloring. The relation $Q(L)\to T_2$ records whether each arc is colored by a rotation or reflection. At every crossing, the two under-arcs have the same type. This causes the type to propagate around each component, making every arc on the link component of the same type, either a reflection or rotation.

Let us fix $\tau\in\{0,1\}^\ell$ and assign the exponent $0$ to every arc in the link. Color a component by $e$ when the signature entry is $0$ and by $s$ when its signature entry is $1$. Equation~\eqref{eq:dihedral-action} shows that every crossing relation is satisfied. This means that every component signature occurs.
\end{proof}

\section{Local Relations and Signature Matrices}

Let $x_a,x_b,x_c$ be the exponents of the over-arc, source under-arc, and target under-arc at any crossing. If the over-arc has exponents with type $\delta$ and the under-arcs have type $\epsilon$, Equation~\eqref{eq:dihedral-action} gives that
\begin{equation}
x_c\equiv(-1)^\delta x_b+2\epsilon x_a\pmod n.
\label{eq:local-relation}
\end{equation}
The four possible types of reflection rotation signatures are therefore~\cite[Lemma~2.1]{IIMS2025}
\[
\begin{array}{c@{\qquad}c@{\qquad}c}
\toprule
(\delta,\epsilon,\epsilon) & \text{Exponent formula} & \text{Integer row relation}\\
\midrule
(0,0,0) & x_c=x_b & x_b-x_c=0\\
(1,0,0) & x_c=-x_b & x_b+x_c=0\\
(0,1,1) & x_c=x_b+2x_a & 2x_a+x_b-x_c=0\\
(1,1,1) & x_c=-x_b+2x_a & 2x_a-x_b-x_c=0.\\
\bottomrule
\end{array}
\]

We can then note that $x_1=x_b$ for the source under-arc, $x_2=x_a$ for the over-arc, and $x_3=x_c$ for the target under-arc, with the types $[\epsilon_2,\epsilon_1,\epsilon_3]$, these cases combine to give a relation that will hold true for any crossing
\begin{equation}
2\epsilon_1x_2+(-1)^{\epsilon_2}x_1-x_3=0.
\label{eq:universal-crossing}
\end{equation}
For a fixed component signature, each $\epsilon_i$ is determined by the component which contains the corresponding arc.

\begin{remark}[The Fox-coloring]
For the all-reflection signature, $\epsilon=\delta=1$, Equation~\eqref{eq:dihedral-action} becomes
\[
x\triangleright y=2y-x.
\]
This means that the reflection subquandle of $\Conj(D_n)$ is simply the standard dihedral quandle $R_n$~\cite[p.~105]{kamada2002}. This dihedral quandle has the crossing relations
\[
2x_a\equiv x_b+x_c\pmod n.
\]
The all-reflection signature therefore is the same as the Fox $n$-coloring system~\cite[Remark~2]{IIMS2025}~\cite[Section~2.1]{kauffman}. All-rotation and all-reflection signatures are both contained within the full family of component signatures. However, the full family of component signatures also contains mixed signatures, where the type of the over-arc is different from the type of the under arcs, which not occur when the arcs are only subject to the reflection subquandle.

For a knot, only the all-rotation and all-reflection signatures occur. The all-rotation sector contributes $n$ additional constant rotation colorings.
\end{remark}

For a fixed component signature $\tau$ on a link diagram $D$ with $m$ arcs and $k$ crossings, a component with no undercrossings is considered as one arc. We can then assign the exponent variables $x_1,\ldots,x_m$ to the arcs. Each crossing in the link then will contribute a row to the matrix using Equation~\eqref{eq:universal-crossing}, giving
\[
M_\tau(D)\in\operatorname{Mat}_{k\times m}(\Z).
\]
Therefore, through the exponent relations, we are given a system of equations that is solved when
\[
M_\tau(D)x\equiv0\pmod n.
\]
The matrix depends on the diagram, the arc labels, the component ordering, and the relation-oriented ordering at each crossing, but not on $n$.

\begin{proposition}
For a fixed component signature $\tau$, the $\Conj(D_n)$-colorings of a link $L$ with a signature $\tau$ are in bijection with the solutions
\[
M_\tau(D)x\equiv0\pmod n.
\]
\end{proposition}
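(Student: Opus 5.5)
The plan is to write down the explicit map in each direction and check that the crossing conditions on the two sides agree, row by row. Fix the diagram $D$ with arcs labelled $1,\ldots,m$. Each arc lies on a unique component $L_{j(i)}$, and we put $\epsilon_i=\tau_{j(i)}$. A coloring $f:Q(L)\to\Conj(D_n)$ is the same thing as an assignment of group elements to arcs that satisfies $c=b\triangleright a$ at every crossing, because $Q(L)$ is generated by the arcs with exactly these relations.

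Given a coloring $f$ of signature $\tau$, we write $f(\text{arc }i)=s^{\epsilon_i}r^{x_i}$ with $x_i\in\Z/n\Z$. This uses the unique normal form $s^\epsilon r^x$ from Section~\ref{sec:dihedral-group}. The type of arc $i$ is $\epsilon_i$, since by the earlier proposition the type is constant on components and equals $\tau$ by assumption. The map $\Phi(f)=(x_1,\ldots,x_m)$ is injective, because once $\tau$ is fixed the types are fixed, and the exponents then determine every group element. In the other direction, any vector $x\in(\Z/n\Z)^m$ defines a candidate arc labelling $s^{\epsilon_i}r^{x_i}$ with signature $\tau$.

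The key step is to show that a labelling of signature $\tau$ satisfies all crossing relations if and only if $M_\tau(D)x\equiv0\pmod n$. Take a crossing with over-arc $a$, source under-arc $b$ and target under-arc $c$. The under-arcs $b$ and $c$ lie on the same component, so they share the type $\epsilon$, and the over-arc has type $\delta$. By Equation~\eqref{eq:dihedral-action}, the product $b\triangleright a$ has type $\epsilon$ and exponent $(-1)^\delta x_b+2\epsilon x_a$. The type of $c$ is automatically $\epsilon$, so the relation $c=b\triangleright a$ holds exactly when Equation~\eqref{eq:local-relation} holds. In the notation of Equation~\eqref{eq:universal-crossing}, with $x_1=x_b$, $x_2=x_a$ and $x_3=x_c$, this is the vanishing mod $n$ of the row that this crossing contributes to $M_\tau(D)$.

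I expect the main obstacle to be bookkeeping about the coefficients in that row, and I will handle it directly. I will check that the coefficient $(-1)^{\epsilon_2}$ multiplies the source under-arc variable, with $\epsilon_2$ the type of the over-arc, and that $2\epsilon_1$ multiplies the over-arc variable, with $\epsilon_1$ the common type of the under-arcs. This matches the paper's table. I also need to treat the degenerate case in which the over-arc coincides with an under-arc, which happens in a kink. There the row has coefficients adding on a single variable, and the equivalence still holds because the computation is entrywise. For inverse crossings, the paper's convention of relation-oriented ordering ensures that every crossing is written in the forward form, so the same analysis applies.

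Running over all crossings, the labelling is a coloring if and only if $M_\tau(D)x\equiv0\pmod n$. So $\Phi$ is a bijection onto the solution set, and its inverse is $x\mapsto(s^{\epsilon_i}r^{x_i})_i$.
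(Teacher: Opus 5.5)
Your proposal is correct and takes the same approach as the paper's proof. You write each arc color as $s^{\epsilon_i}r^{x_i}$ with $\epsilon_i$ fixed by $\tau$, then use Equation~\eqref{eq:dihedral-action} to show that each crossing relation holds exactly when the corresponding row of $M_\tau(D)$ vanishes modulo $n$, and you spell out details the paper leaves implicit (uniqueness of the normal form, arcs that coincide at a crossing, and the relation-oriented handling of inverse crossings).
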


\begin{proof}
Each arc color either has the form $r^{x_i}$ or $sr^{x_i}$ depending on the component signature. Equation~\eqref{eq:dihedral-action} shows that the quandle relation at a crossing is the same as the row of $M_\tau(D)$. This means that coloring the link gives a solution for the exponent system. Conversely, a solution assigns also a group element of a certain type to every arc and satisfies the crossing relations for a link.
\end{proof}

\section{Signature Modules and Smith Normal Form}

Let $A(D)$ be the set of arcs of a link diagram $D$. The arcs then generate an abelian group that is independent of the dihedral group and is also a $\Z$-module
\[
F(D)=\bigoplus_{a\in A(D)}\Z e_a\cong\Z^m.
\]
For a fixed signature $\tau$ of a link, let $R_\tau(D)\subseteq F(D)$ be the submodule that is generated by the integer crossing relations in Equation~\eqref{eq:universal-crossing}.  The purpose of the signature module is to keep the same integer crossing equations without choosing a modulus \(n\). Reducing these equations modulo \(n\) then recovers the colorings with the signature \(\tau\).

\begin{definition}
The signature module is
\begin{equation}
A_\tau(D)=F(D)/R_\tau(D)
\cong
\Z^m/\rowspan_{\Z}\bigl(M_\tau(D)\bigr).
\label{eq:signature-module}
\end{equation}
In this signature module, $A_\tau(D)$ is the abelian group generated by the arcs of the link diagram $D$. This abelian group obeys the rules that the integer crossing relations are determined by $\tau$.
\end{definition}

An assignment $x_1,\ldots,x_m\in\Z/n\Z$ is a coloring with the rotation-reflection signature $\tau$ if and only if it satisfies every crossing relation in the link. Therefore,
\[
\{\text{colorings with signature }\tau\}
\cong
\HomZ\bigl(A_\tau(D),\Z/n\Z\bigr).
\]

If $M_\tau(D)$ has rank $r_\tau$ and nonzero entries in the Smith form~\cite[Section~12.1]{dummitfoote}~\cite[Sections~2.1--2.2]{stanley},
\[
d_{\tau,1}\mid d_{\tau,2}\mid\cdots\mid d_{\tau,r_\tau}.
\]
then
\[
A_\tau(D)
\cong
\Z^{m-r_\tau}\oplus
\bigoplus_{i=1}^{r_\tau}\Z/d_{\tau,i}\Z,
\]
Smith entries with value of $1$ can be omitted; they do not contribute to the total number of colors. We call the free rank $m-r_\tau$, together with the nonunit Smith entries $d_{\tau,i}>1$, the reduced Smith data.

\begin{proposition}
Let $M$ be an integer matrix with $m$ columns, rank $r$, and Smith entries $d_1,\ldots,d_r$ that are greater than $0$. Then
\[
\left|\{x\in(\Z/n\Z)^m:Mx\equiv0\pmod n\}\right|
=
n^{m-r}\prod_{i=1}^{r}\gcd(d_i,n).
\]
\end{proposition}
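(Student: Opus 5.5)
We will reduce the count to the diagonal case using the Smith normal form. Since $M\in\operatorname{Mat}_{k\times m}(\Z)$ has rank $r$, there are unimodular matrices $U\in GL_k(\Z)$ and $V\in GL_m(\Z)$ with
\[
UMV=S,\qquad S=\begin{pmatrix}\operatorname{diag}(d_1,\ldots,d_r)&0\\0&0\end{pmatrix},
\]
where $S$ is $k\times m$ and $d_1\mid\cdots\mid d_r$ are positive. Reducing modulo $n$, the images $\bar U$ and $\bar V$ are invertible over $\Z/n\Z$, because their determinants are $\pm1$, which are units mod $n$. The substitution $y=\bar V^{-1}x$ is then a bijection of $(\Z/n\Z)^m$. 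It carries the solution set of $Mx\equiv0$ onto the solution set of $\bar U M\bar V y\equiv0$, that is, of $Sy\equiv0\pmod n$. Left multiplication by the invertible $\bar U$ does not change the kernel. So the two solution sets have the same cardinality.

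The diagonal system splits coordinatewise:
\[
d_iy_i\equiv0\pmod n\quad(1\le i\le r),\qquad y_{r+1},\ldots,y_m\ \text{free}.
\]
The last $m-r$ coordinates contribute a factor $n^{m-r}$. For each $i\le r$ we count $y\in\Z/n\Z$ with $d_iy\equiv0\pmod n$. Multiplication by $d_i$ on the cyclic group $\Z/n\Z$ has image $g\Z/n\Z$, where $g=\gcd(d_i,n)$. This image has $n/g$ elements, so the kernel has $\gcd(d_i,n)$ elements. Equivalently, the solutions are exactly the multiples of $n/\gcd(d_i,n)$. Multiplying the independent coordinate counts gives $n^{m-r}\prod_{i=1}^r\gcd(d_i,n)$.

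An equivalent formulation matches the paper's framing. The solution set is $\HomZ(\Z^m/\rowspan_{\Z}(M),\Z/n\Z)$, and the decomposition $A\cong\Z^{m-r}\oplus\bigoplus_i\Z/d_i\Z$ stated just before the proposition gives the same count. This uses $|\HomZ(\Z,\Z/n\Z)|=n$ and $|\HomZ(\Z/d\Z,\Z/n\Z)|=\gcd(d,n)$.

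The only point requiring care is the first step: the reduction mod $n$ must be compatible with the change of basis. In particular, a unimodular integer matrix remains invertible mod $n$ for every $n$, including $n$ not prime. This holds because its integer inverse reduces to an inverse mod $n$. Once that is checked, the rest is the elementary kernel count above.
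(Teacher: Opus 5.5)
Your proof is correct and takes essentially the same approach as the paper: change basis by unimodular matrices that remain invertible mod $n$, then count a factor $n$ for each zero column and $\gcd(d_i,n)$ for each nonzero diagonal entry. You also justify two steps the paper only asserts, namely that invertibility survives reduction modulo a composite $n$ and that $d_iy\equiv0\pmod n$ has exactly $\gcd(d_i,n)$ solutions.
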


\begin{proof}
The row and column operations that are used to obtain the Smith normal form are invertible under the modulus $n$, so they do not change the number of solutions, but just gives a different representation for the matrix. Each of the columns that only contain $0$ gives one free variable and contributes a factor $n$. For a nonzero Smith entry $d$, the equality $dy\equiv0\pmod n$ then has $\gcd(d,n)$ solutions, and multiplying these solutions together gives the formula for the Smith signatures~\cite[Corollary~2.2]{ge}.
\end{proof}

\section{Reidemeister Invariance of the Signature Modules}

\begin{definition}
Two integer matrices are called equivalent, written $M\sim N$, if one can be obtained from the other by adding or subtracting an integer multiple of a row from another, adding or subtracting an integer multiple of a column from another, multiplying a row or column by $\pm1$, and adjoining or deleting an isolated entry $\pm1$ together with its row and column. We also allow adjoining or deleting a zero row. A zero row represents the relation $0=0$, so this
operation does not change the abelian group that is represented by the matrix.~\cite[Section~2.1]{ge}.
\end{definition}

Row operations will replace the crossing relations with an equivalent set of relations, while column operations will replace the arc variables with an equivalent set of variables. An isolated entry both rowwise and columnwise $\pm1$ corresponds to a variable that is completely determined by one relation, so that variable and relation may be removed together. Therefore, these operations do not change the abelian group that is presented by the matrix~\cite[Theorem~2.3]{stanley}.

These operations preserve the free rank and the nonunit entries in the Smith normal form of the matrix~\cite[Sections~2.1--2.2]{stanley}. Therefore, the number of unit entries do not need to be preserved, since an identity block may be added or removed. For this reason, the invariant is the signature module, which is just its reduced Smith data. It is not the complete Smith diagonal.

We now define the two geometric crossing types. Let $y$ be the exponent of the over-arc, let $x_{\mathrm{in}}$ and $x_{\mathrm{out}}$ be the incoming and outgoing under-arc exponents, and let
\[
\delta,\epsilon\in\{0,1\}
\]
be the types of the over- and under-components. Set
\[
u=(-1)^\delta.
\]
Equation~\eqref{eq:dihedral-action} gives
\[
x_{\mathrm{out}}=ux_{\mathrm{in}}+2\epsilon y
\]
for the regular operation.

For the inverse operation, suppose that
\[
z=ux+2\epsilon y
\]
is the result of the regular operation. Solving this equation for $x$, and using $u^2=1$, gives
\[
x=uz-2u\epsilon y.
\]
Therefore, after renaming $z$ as $x_{\mathrm{in}}$ and $x$ as $x_{\mathrm{out}}$, the inverse operation is
\[
x_{\mathrm{out}}=ux_{\mathrm{in}}-2u\epsilon y.
\]

\begin{table}[H]
\centering
\caption{Regular and inverse crossing relations in exponent form.}
\small
\resizebox{\textwidth}{!}{%
\begin{tabular}{ccccc}
\toprule
$(\delta,\epsilon)$ & Regular crossing & Inverse crossing & Regular row & Inverse row\\
\midrule
$(0,0)$ & $x_{\mathrm{out}}=x_{\mathrm{in}}$ & $x_{\mathrm{out}}=x_{\mathrm{in}}$ & $x_{\mathrm{in}}-x_{\mathrm{out}}=0$ & $x_{\mathrm{out}}-x_{\mathrm{in}}=0$\\
$(1,0)$ & $x_{\mathrm{out}}=-x_{\mathrm{in}}$ & $x_{\mathrm{out}}=-x_{\mathrm{in}}$ & $-x_{\mathrm{in}}-x_{\mathrm{out}}=0$ & $-x_{\mathrm{out}}-x_{\mathrm{in}}=0$\\
$(0,1)$ & $x_{\mathrm{out}}=x_{\mathrm{in}}+2y$ & $x_{\mathrm{out}}=x_{\mathrm{in}}-2y$ & $2y+x_{\mathrm{in}}-x_{\mathrm{out}}=0$ & $2y+x_{\mathrm{out}}-x_{\mathrm{in}}=0$\\
$(1,1)$ & $x_{\mathrm{out}}=-x_{\mathrm{in}}+2y$ & $x_{\mathrm{out}}=-x_{\mathrm{in}}+2y$ & $2y-x_{\mathrm{in}}-x_{\mathrm{out}}=0$ & $2y-x_{\mathrm{out}}-x_{\mathrm{in}}=0$\\
\bottomrule
\end{tabular}%
}
\end{table}

The table shows that $\triangleright$ and $\triangleright^{-1}$ are different for the mixed case $(\delta,\epsilon)=(0,1)$. For the oriented crossing conventions that we determined before, we can set the source $x_s$ and target $x_t$ so that $x_t=x_s\triangleright y$. At a regular crossing $(x_s,x_t)=(x_{\mathrm{in}},x_{\mathrm{out}})$, while at an inverse crossing $(x_s,x_t)=(x_{\mathrm{out}},x_{\mathrm{in}})$. Both cases can therefore be represented by the equation
\begin{equation}
2\epsilon y+(-1)^\delta x_s-x_t=0.
\label{eq:oriented-crossing}
\end{equation}

Polyak proved that the four oriented moves $\Omega_{1a}$, $\Omega_{1b}$, $\Omega_{2a}$, $\Omega_{3a}$ generate all of the oriented Reidemeister moves \cite[Theorem~1.1]{polyak}. We confirm that the reduced Smith data is invariant under these moves. Because every matrix operation below is invertible, each calculation also proves the invariance of the corresponding move in the reverse direction.

\begin{lemma}[Splitting an arc by an equality relation]
Suppose one arc is replaced by two generators $x_1,x_2$ with the relation $x_1-x_2=0$. If $M_{\mathrm{before}}$ and $M_{\mathrm{after}}$ are the matrices before and after the arc is split, then
\[
M_{\mathrm{after}}\sim M_{\mathrm{before}}\oplus[1].
\]
\end{lemma}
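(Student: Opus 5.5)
We prove the lemma by writing $M_{\mathrm{after}}$ in block form and reducing it to $M_{\mathrm{before}}\oplus[1]$ with the allowed operations. Let $x$ be the arc of the original diagram that is split. Order the columns of $M_{\mathrm{after}}$ so that $x_1$ and $x_2$ come first, followed by the columns of the remaining arcs. Order the rows so that the new equality row $x_1-x_2=0$ comes first.

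Every other crossing relation in $M_{\mathrm{after}}$ is a relation of the original diagram. The only difference is that each occurrence of $x$ has been relabelled as either $x_1$ or $x_2$, according to which piece of the arc meets that crossing. So if $v$ is the column of $M_{\mathrm{before}}$ for $x$, then $v=v_1+v_2$, where $v_1$ and $v_2$ are the columns of $x_1$ and $x_2$ in the lower block of $M_{\mathrm{after}}$. This holds because the entries of Equation~\eqref{eq:oriented-crossing} add linearly when two variables are identified. The same identity covers a crossing where $x$ occurs twice, for example as over-arc and as an under-arc: there the entries attached to $x$ split between $x_1$ and $x_2$, and their sum is the original entry.

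The reduction then takes three steps.

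\begin{enumerate}
\item Add column $x_1$ to column $x_2$. The top row becomes $(1,0,0,\ldots,0)$. The lower block of the $x_2$ column becomes $v_1+v_2=v$, and the remaining columns are unchanged.
\item Subtract multiples of the top row from the lower rows to clear the $x_1$ column below the entry $1$. This is allowed because the top row is now zero outside the $x_1$ position.
\item The entry $1$ is now isolated in its row and column. The complementary block has columns $v$ followed by the columns of the other arcs, which is exactly $M_{\mathrm{before}}$. Hence $M_{\mathrm{after}}\sim M_{\mathrm{before}}\oplus[1]$.
\end{enumerate}

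The only point needing care is the identity $v=v_1+v_2$. Its edge cases are the crossing where the arc acts as both over-arc and under-arc, and the fact that each crossing row uses the equality-type sign convention of Equation~\eqref{eq:oriented-crossing}. If the equality row is instead given as $x_2-x_1=0$, we first multiply that row by $-1$, which the definition of $\sim$ allows.
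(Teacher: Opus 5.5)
Your proof is correct and follows the paper's own argument. It uses the same column operation (add the $x_1$ column to the $x_2$ column), then the same row operations that clear the pivot column using the equality row, leaving $M_{\mathrm{before}}\oplus[1]$. Your explicit justification of $v=v_1+v_2$, including crossings where the split arc appears twice, spells out what the paper simply asserts when it writes $M_{\mathrm{before}}=[A\;\; v+w]$.
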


\begin{proof}
The two new arc generators can both occur in crossing relations outside the local crossing.
\[
M_{\mathrm{after}}=
\begin{bmatrix}
A&v&w\\
0&1&-1
\end{bmatrix},
\]
In the matrix above, $A$ represents the other columns from nonlocal arcs and $v,w$ of $x_1,x_2$ in other nonlocal rows. Before the arc is split, the two segments form one arc, so
\[
M_{\mathrm{before}}=
\begin{bmatrix}
A&v+w
\end{bmatrix}.
\]
Apply the column operation $X_2\leftarrow X_2+X_1$:
\[
M_{\mathrm{after}}\sim
\begin{bmatrix}
A&v&v+w\\
0&1&0
\end{bmatrix}.
\]
For each external row $R_i$, let $v_i$ be its entry in the pivot column and apply $R_i\leftarrow R_i-v_iR_c$, where $R_c$ is the equality row. This cancels out the pivot column and simply gives
\[
M_{\mathrm{after}}\sim
\begin{bmatrix}
A&0&v+w\\
0&1&0
\end{bmatrix}
\sim M_{\mathrm{before}}\oplus[1].
\]
\end{proof}

\begin{figure}[H]
\centering
\begin{tikzpicture}[line width=0.9pt,>=Stealth]
    \begin{scope}[xshift=-4.2cm]
        \draw[->] (-1.9,-1.05) -- (-1.9,1.15);
        \draw[<->] (-1.5,0) -- node[above] {$\Omega_{1a}$} (-0.55,0);
        \draw
            (0,-1.05)
            .. controls (0,-0.35) and (0.05,-0.12) .. (0.15,0.05)
            .. controls (0.20,0.12) and (0.25,0.18) .. (0.30,0.24)
            .. controls (0.36,0.31) and (0.42,0.38) .. (0.48,0.43)
            .. controls (0.75,0.70) and (1.05,0.70) .. (1.18,0.45)
            .. controls (1.36,0.12) and (1.14,-0.23) .. (0.80,-0.20)
            .. controls (0.58,-0.18) and (0.44,0.02) .. (0.38,0.14);
        \draw[->]
            (0.22,0.34)
            .. controls (0.10,0.70) and (0.00,0.93) .. (-0.12,1.15);
        \node[left] at (-0.02,-0.58) {$x_1$};
        \node[left] at (-0.10,0.91) {$x_2$};
    \end{scope}

    \begin{scope}[xshift=3.5cm]
        \draw[<-] (-1.9,-1.05) -- (-1.9,1.15);
        \draw[<->] (-1.5,0) -- node[above] {$\Omega_{1b}$} (-0.55,0);
        \draw[<-]
            (0,-1.05)
            .. controls (0,-0.35) and (0.05,-0.12) .. (0.15,0.05)
            .. controls (0.20,0.12) and (0.25,0.18) .. (0.30,0.24)
            .. controls (0.36,0.31) and (0.42,0.38) .. (0.48,0.43)
            .. controls (0.75,0.70) and (1.05,0.70) .. (1.18,0.45)
            .. controls (1.36,0.12) and (1.14,-0.23) .. (0.80,-0.20)
            .. controls (0.58,-0.18) and (0.44,0.02) .. (0.38,0.14);
        \draw
            (0.22,0.34)
            .. controls (0.10,0.70) and (0.00,0.93) .. (-0.12,1.15);
        \node[left] at (-0.02,-0.58) {$x_1$};
        \node[left] at (-0.10,0.91) {$x_2$};
    \end{scope}
\end{tikzpicture}
\caption{The two oriented type-I generators in Polyak's generating set.}
\label{fig:r1}
\end{figure}

\subsection{The move \texorpdfstring{$\Omega_{1a}$}{Omega 1a}}

The two arcs in Figure~\ref{fig:r1} lie on the same component, so they both have the same signature type $\epsilon\in\{0,1\}$. In $\Omega_{1a}$, the over-arc is $x_1$, and the relation-oriented source and target under-arcs are $x_1$ and $x_2$, respectively. This means that the crossing relation is
\[
2\epsilon x_1+(-1)^\epsilon x_1-x_2=0.
\]
The identity
\[
2\epsilon+(-1)^\epsilon=1,
\qquad
\epsilon\in\{0,1\},
\]
reduces this relation to
\[
x_1-x_2=0.
\]
Thus, the move only splits one arc into two and adds an equality relation. Lemma~6.2 gives
\[
M_{\mathrm{after}}\sim M_{\mathrm{before}}\oplus[1].
\]

\subsection{The move \texorpdfstring{$\Omega_{1b}$}{Omega 1b}}

The two arcs in Figure~\ref{fig:r1} lie on the same component, so they both have the same signature type $\epsilon\in\{0,1\}$. In $\Omega_{1b}$, the over-arc is $x_1$, and the relation-oriented source and target under-arcs are $x_2$ and $x_1$, respectively. This means that the crossing relation is
\[
2\epsilon x_1+(-1)^\epsilon x_2-x_1=0.
\]
The identity
\[
2\epsilon-1=-(-1)^\epsilon,
\qquad
\epsilon\in\{0,1\},
\]
reduces this relation to
\[
(-1)^\epsilon(x_2-x_1)=0.
\]
Since $(-1)^\epsilon=\pm1$, multiplying by this factor reduces the relation to
\[
x_1-x_2=0.
\]
Thus, the move only splits one arc into two and adds one equality relation. Lemma~6.2 gives
\[
M_{\mathrm{after}}\sim M_{\mathrm{before}}\oplus[1].
\]

\subsection{The move \texorpdfstring{$\Omega_{2a}$}{Omega 2a}}

\begin{figure}[H]
\centering
\begin{tikzpicture}[line width=0.9pt,>=Stealth]
    \begin{scope}[xshift=-3.5cm]
        \draw[->] (-0.55,-1.15) -- (-0.55,1.15);
        \draw[->] (0.55,-1.15) -- (0.55,1.15);
    \end{scope}
    \draw[<->] (-1.55,0) -- node[above] {$\Omega_{2a}$} (-0.45,0);
    \begin{scope}[xshift=2.0cm]
        \draw[->]
            (1.0,-1.15) -- (0,-0.48)
            .. controls (-0.70,-0.10) and (-0.70,0.30) .. (0,0.62)
            -- (1.0,1.15);
        \draw[white,line width=3.5pt]
            (-1.0,-1.15) -- (0,-0.48)
            .. controls (0.70,-0.10) and (0.70,0.30) .. (0,0.62)
            -- (-1.0,1.15);
        \draw[->]
            (-1.0,-1.15) -- (0,-0.48)
            .. controls (0.70,-0.10) and (0.70,0.30) .. (0,0.62)
            -- (-1.0,1.15);
        \node[right] at (0.86,-0.91) {$x_1$};
        \node[left] at (-0.55,0.08) {$x_2$};
        \node[right] at (0.86,0.96) {$x_3$};
        \node[left] at (-0.86,-0.91) {$y$};
        \node[below] at (0,-0.50) {$c_1$};
        \node[above] at (0,0.64) {$c_2$};
    \end{scope}
\end{tikzpicture}
\caption{Polyak's oriented move $\Omega_{2a}$. Both strands are oriented upward. The strand labeled $y$ passes over at both crossings.}
\label{fig:r2}
\end{figure}

Let the over-strand have exponent $y$ as well as type $\delta$, and let the under-strand have type $\epsilon$. The two crossings have opposite types, meaning that one has a forward quandle relation, while the other one has an inverse quandle relation. When we use the oriented convention, which differentiates between the forward and inverse convention, their triples are
\[
c_1=(y,x_1,x_2),
\qquad
c_2=(y,x_3,x_2).
\]
The lower crossing then gives the equation
\[
2\epsilon y+(-1)^\delta x_1-x_2=0,
\]
while the upper crossing gives the equation
\[
2\epsilon y+(-1)^\delta x_3-x_2=0.
\]
We set $a=(-1)^\delta$. The matrix after the move $\Omega_{2a}$ is then
\[
M_{\mathrm{after}}=
\begin{array}{c|cccc}
&y&x_1&x_2&x_3\\ \hline
c_1&2\epsilon&a&-1&0\\
c_2&2\epsilon&0&-1&a
\end{array}.
\]
Apply $c_2\leftarrow c_2-c_1$:
\[
M_{\mathrm{after}}\sim
\begin{array}{c|cccc}
&y&x_1&x_2&x_3\\ \hline
c_1&2\epsilon&a&-1&0\\
c_2&0&-a&0&a
\end{array}.
\]
The $x_2$-column is local to the move, meaning that it does not occur in any crossings outside of the move, and it has the entry $-1$ in row $c_1$. Using the $x_2$ column as a pivot, column operations clear the other entries of $c_1$, so the pair $(c_1,x_2)$ is one identity block and it splits off. The remaining relation is then
\[
a(-x_1+x_3)=0.
\]
Since $a=\pm1$, this means that the relation is $x_1-x_3=0$. It identifies the two external pieces of the under-strand, and completely recovers the single under-arc that existed before the move. By Lemma~6.2, this equality only contributes a second identity block. Therefore
\[
M_{\mathrm{after}}\sim M_{\mathrm{before}}\oplus I_2,
\]
so the signature module is unchanged under $\Omega_{2a}$.

\subsection{The move \texorpdfstring{$\Omega_{3a}$}{Omega 3a}}

\begin{figure}[H]
\centering
\begin{tikzpicture}[line width=0.9pt,>=Stealth,scale=0.92]
    \begin{scope}[xshift=-4.1cm]
        \draw[->] (-1.55,-0.95) -- (1.55,1.45);
        \draw[white,line width=3.5pt] (-1.55,1.45) -- (1.55,-0.95);
        \draw[->] (-1.55,1.45) -- (1.55,-0.95);
        \draw[white,line width=3.5pt] (-1.9,-0.55) -- (1.9,-0.55);
        \draw[<-] (-1.9,-0.55) -- (1.9,-0.55);
        \node[left] at (-1.47,1.38) {$d_0$};
        \node[right] at (1.46,-0.88) {$d_1$};
        \node[left] at (-1.47,-0.88) {$u_0$};
        \node[right] at (1.46,1.38) {$u_2$};
        \node[above] at (-1.63,-0.55) {$h$};
        \node[left] at (-0.47,-0.02) {$u_1$};
        \node[below] at (-1.02,-0.62) {$c_1$};
        \node[above] at (0,0.31) {$c_2$};
        \node[below] at (1.02,-0.62) {$c_3$};
        \node at (0,-1.65) {$D_L$};
    \end{scope}

    \draw[<->] (-0.55,0) -- node[above] {$\Omega_{3a}$} (0.55,0);

    \begin{scope}[xshift=4.1cm]
        \draw[->] (-1.55,-1.25) -- (1.55,1.45);
        \draw[white,line width=3.5pt] (-1.55,1.45) -- (1.55,-1.25);
        \draw[->] (-1.55,1.45) -- (1.55,-1.25);
        \draw[white,line width=3.5pt] (-1.9,0.85) -- (1.9,0.85);
        \draw[<-] (-1.9,0.85) -- (1.9,0.85);
        \node[left] at (-1.47,1.38) {$d_0$};
        \node[right] at (1.46,-1.18) {$d_1$};
        \node[left] at (-1.47,-1.18) {$u_0$};
        \node[right] at (1.46,1.38) {$u_2$};
        \node[below] at (-1.63,0.85) {$h$};
        \node[right] at (0.32,0.35) {$u'_1$};
        \node[below] at (0,-0.02) {$c_1$};
        \node[above] at (0.86,0.85) {$c_2$};
        \node[below] at (-0.86,0.85) {$c_3$};
        \node at (0,-1.65) {$D_R$};
    \end{scope}
\end{tikzpicture}
\caption{Polyak's oriented move $\Omega_{3a}$ with the arc labels used in the proof. The horizontal strand is oriented from right to left and passes over both diagonal strands. The descending strand passes over the rising strand.}
\label{fig:r3}
\end{figure}

There are three strands in Figure~\ref{fig:r3}. Let the horizontal, descending, and rising strands have types
\[
\delta,\qquad\eta,\qquad\epsilon,
\]
respectively. Their types are constant before and after the move because each of the strands lies on one component. Set
\[
a=(-1)^\delta,
\qquad
b=(-1)^\eta.
\]
The horizontal strand has the exponent $h$. The descending strand has the exponents $d_0,d_1$ on the two sides of its undercrossing with the horizontal strand. The rising strand has external exponents $u_0$ and $u_2$. It also has one internal exponent on each side of the move: $u_1$ on $D_L$ and $u'_1$ on $D_R$.

The relation-oriented triples are
\[
\begin{array}{c|c|c}
\toprule
\text{Side}&\text{Crossing}&(\text{over},\text{source},\text{target})\\
\midrule
D_L&c_1&(h,u_1,u_0)\\
D_L&c_2&(d_0,u_1,u_2)\\
D_L&c_3&(h,d_0,d_1)\\
D_R&c_1&(d_1,u_0,u'_1)\\
D_R&c_2&(h,u_2,u'_1)\\
D_R&c_3&(h,d_0,d_1)\\
\bottomrule
\end{array}
\]
The reversed order of the source-target strand at the horizontal-rising crossings shows the inverse geometric crossing relation, therefore, we use the $\triangleright^{-1}$ relation.

On the left side, the three crossing relations are
\[
2\epsilon h+au_1-u_0=0,
\]
\[
2\epsilon d_0+bu_1-u_2=0,
\]
and
\[
2\eta h+ad_0-d_1=0.
\]
With the columns ordered as $h,d_0,d_1,u_0,u_1,u_2$, the local matrix is then
\[
M_L=
\begin{array}{c|cccccc}
&h&d_0&d_1&u_0&u_1&u_2\\ \hline
c_1&2\epsilon&0&0&-1&a&0\\
c_2&0&2\epsilon&0&0&b&-1\\
c_3&2\eta&a&-1&0&0&0
\end{array}.
\]
Using the integer matrix operations, we multiply $c_1$ by the $a$, and then use the operation $c_2\leftarrow c_2-bc_1$ to get the second row. The second row then becomes
\[
(-2ab\epsilon,\ 2\epsilon,\ 0,\ ab,\ 0,\ -1).
\]
The $u_1$-column has the elements $(1,0,0)$; it does not interact with any crossings outside of the link. Using this column as a pivot, we can clear the other entries in the row $c_1$ using column operations, and delete the identity block that is a result from this operation. The reduced matrix with only the crossings is then
\[
N_L=
\begin{array}{c|ccccc}
&h&d_0&d_1&u_0&u_2\\ \hline
c_2&-2ab\epsilon&2\epsilon&0&ab&-1\\
c_3&2\eta&a&-1&0&0
\end{array}.
\]

On the right side, the crossing relations are
\[
2\epsilon d_1+bu_0-u'_1=0,
\]
\[
2\epsilon h+au_2-u'_1=0,
\]
and
\[
2\eta h+ad_0-d_1=0.
\]
Thus
\[
M_R=
\begin{array}{c|cccccc}
&h&d_0&d_1&u_0&u'_1&u_2\\ \hline
c_1&0&0&2\epsilon&b&-1&0\\
c_2&2\epsilon&0&0&0&-1&a\\
c_3&2\eta&a&-1&0&0&0
\end{array}.
\]
Using the integer matrix operations,
\[
c_1\leftarrow c_1-c_2,
\qquad
c_1\leftarrow ac_1,
\qquad
c_1\leftarrow c_1+2a\epsilon c_3.
\]
The first row that results from this is then
\[
\bigl(2a\epsilon(2\eta-1),\ 2\epsilon,\ 0,\ ab,\ 0,\ -1\bigr).
\]
Since $b=(-1)^\eta=1-2\eta$, we have $2\eta-1=-b$. Therefore this row is
\[
(-2ab\epsilon,\ 2\epsilon,\ 0,\ ab,\ 0,\ -1).
\]
The $u'_1$-column now equals $(0,-1,0)$. This makes it a unit pivot, so we use row $c_2$ and then can delete the $u'_1$-column because it is an identity block. The remaining matrix is
\[
N_R=
\begin{array}{c|ccccc}
&h&d_0&d_1&u_0&u_2\\ \hline
c_1&-2ab\epsilon&2\epsilon&0&ab&-1\\
c_3&2\eta&a&-1&0&0
\end{array}.
\]
Therefore $N_R=N_L$. Both sides reduce to the same matrix, so
\[
M_L\sim M_R.
\]
We have proven that the signature module is unchanged under the move $\Omega_{3a}$.

\begin{theorem}[Signature-module invariance]
Let $L$ be an oriented ordered link with a fixed component with signature $\tau$. The reduced Smith data of $M_\tau(D)$ does not depend on the chosen link diagram $D$.
\end{theorem}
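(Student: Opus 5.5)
The plan is to assemble the local computations of the preceding subsections into a global statement, using Reidemeister's theorem together with Polyak's generating set \cite[Theorem~1.1]{polyak}. Let $D$ and $D'$ be two diagrams of the same oriented ordered link $L$. They are related by a finite sequence of oriented Reidemeister moves and planar isotopies. By Polyak, each oriented move can be written as a composition of $\Omega_{1a}$, $\Omega_{1b}$, $\Omega_{2a}$, $\Omega_{3a}$ and their inverses. The relation $\sim$ is an equivalence relation, and equivalent matrices present isomorphic abelian groups with the same reduced Smith data \cite[Theorem~2.3]{stanley}. It therefore suffices to show $M_\tau(D_1)\sim M_\tau(D_2)$ whenever $D_2$ is obtained from $D_1$ by one generating move or its inverse.

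Before invoking the local computations, I would dispose of the choices hidden in $M_\tau(D)$: the labeling of arcs, the order of crossings, and the relation-oriented ordering at each crossing. Relabeling arcs and reordering crossings only permute columns and rows. A permutation is a product of transpositions, and each transposition can be realized by the allowed operations, for instance $R_i\leftarrow R_i+R_j$, $R_j\leftarrow R_j-R_i$, $R_i\leftarrow R_i+R_j$, $R_j\leftarrow -R_j$. The relation at each crossing is fixed by Equation~\eqref{eq:oriented-crossing} once the orientation is given, so it is well defined up to sign, and a sign change is allowed. Planar isotopy does not change the combinatorial data at all. The signature $\tau$ is transported along the moves because each component, with its ordering, persists, so each arc keeps its type.

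The main step is to globalize the local matrix computations for each move. A diagram $D_1$ containing the local picture has a matrix of block form
\[
\begin{bmatrix} A & B & 0\\ 0 & C & E\end{bmatrix},
\]
where the columns are split into arcs disjoint from the tangle, the boundary arcs of the tangle, and the internal arcs. The local rows are the bottom block, and the internal arcs appear only in local rows. Each computation in the subsections above uses only the following operations:
\begin{enumerate}
\item[(i)] row operations among local rows;
\item[(ii)] column operations that use a pivot column of an internal arc, which affect only local rows because internal columns vanish in the external rows;
\item[(iii)] deletion of an isolated $\pm1$.
\end{enumerate}
These operations therefore lift verbatim to the full matrix, and external rows are unchanged. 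The exception is Lemma~6.2 for $\Omega_1$ and $\Omega_{2a}$, where the split arc also appears in external rows. That lemma already handles this case by clearing the external entries with row operations.

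For $\Omega_{3a}$, both sides reduce to the same matrix $N_L=N_R$ on the boundary columns $h,d_0,d_1,u_0,u_2$, with the same external block, so the full matrices are equivalent.

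I expect the main obstacle to be the bookkeeping in $\Omega_{3a}$. There the column clearing using the $u_1$ pivot must not touch external rows, and the boundary arcs $h,d_0,d_1,u_0,u_2$ must be identified consistently on both sides as the same global arcs. In particular, $h$ is a single arc on both sides. I would check this by noting that every nonzero entry introduced or cleared lies in a local row, so the external block $[A\ B]$ is literally identical before and after. Combining these steps along the finite sequence of moves gives $M_\tau(D)\sim M_\tau(D')$, which proves the theorem.
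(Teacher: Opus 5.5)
Your proposal is correct and follows the same route as the paper. Like the paper, you invoke Polyak's generating set $\{\Omega_{1a},\Omega_{1b},\Omega_{2a},\Omega_{3a}\}$ and use the local computations (Lemma~6.2 for the $\Omega_1$ and $\Omega_{2a}$ moves, and $N_L=N_R$ for $\Omega_{3a}$) to show that each generator preserves the matrix up to $\sim$. Your added bookkeeping makes explicit what the paper leaves implicit: relabeling invariance, and the fact that local row operations and column operations pivoting on internal-arc columns leave the external block unchanged.

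Your list of operations (i)--(iii) misses one small case: the two pieces of a split arc may already be the same arc, for instance in an $\Omega_1$ or $\Omega_{2a}$ move on a component with no undercrossings. There the equality row degenerates to $0=0$, and the zero-row deletion included in the paper's definition of $\sim$ handles it.
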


\begin{proof}
The calculations above prove that under Polyak's generators, the reduced Smith data does not change.
\[
\Omega_{1a},
\qquad
\Omega_{1b},
\qquad
\Omega_{2a},
\qquad
\Omega_{3a}.
\]
Polyak proved that the above moves generate the entire set of ordered, oriented Reidemeister moves \cite[Theorem~1.1]{polyak}. Therefore proving that every generator in the Polyak set does not change the Smith module also proves that the smith module is a knot invariant.
\end{proof}

Since Theorem~6.3 shows that $A_\tau(D)\cong A_\tau(D')$ for any two diagrams $D,D'$ of $L$, we can denote its isomorphism class as $A_\tau(L)$.

\section{The General Smith Normal Form Formula}

\begin{theorem}[General SNF decomposition]
Let $L=L_1\cup\cdots\cup L_\ell$ be a link that is oriented according to the conventions from before, and represented by a link diagram $D$ with $m$ arcs. For each $\tau\in\{0,1\}^\ell$, let the nonzero Smith entries of $M_\tau(D)$ be $d_{\tau,1},\ldots,d_{\tau,r_\tau}$. Then, for every $n\geq3$,
\[
\left|\HomQnd\bigl(Q(L),\Conj(D_n)\bigr)\right|
=
\sum_{\tau\in\{0,1\}^\ell}
n^{m-r_\tau}
\prod_{i=1}^{r_\tau}\gcd(d_{\tau,i},n).
\]
\end{theorem}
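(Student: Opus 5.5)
The plan is to partition the coloring set by component signature and then count each piece with the Smith normal form counting proposition. First, by the proposition on component signatures, every coloring $f:Q(L)\to\Conj(D_n)$ has a well-defined signature $\tau(f)\in\{0,1\}^\ell$. Concretely, $\pi\circ f$ is constant on each component, and $\tau_j$ records that constant value. Hence
\[
\HomQnd\bigl(Q(L),\Conj(D_n)\bigr)=\bigsqcup_{\tau\in\{0,1\}^\ell}\{f:\tau(f)=\tau\},
\]
a disjoint union because the type of each arc is determined by $f$. The count is therefore the sum over $\tau$ of the number of colorings with signature $\tau$. Signatures with no colorings are not a concern, since the formula term is always at least $1$; indeed the trivial exponent assignment always exists.

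Second, fix $\tau$. The proposition identifying colorings with solutions shows that the colorings of signature $\tau$ are in bijection with $\{x\in(\Z/n\Z)^m: M_\tau(D)x\equiv 0\pmod n\}$. The correspondence sends $x$ to the coloring that assigns $s^{\tau_{j(i)}}r^{x_i}$ to arc $i$, where $j(i)$ is the component containing that arc. Two points need checking here. The first is that each crossing's row in $M_\tau(D)$ is exactly equation~\eqref{eq:oriented-crossing} with the types read off from $\tau$. The second is that inverse crossings are handled by the relation-oriented source/target convention, so a single row formula covers both crossing types. The table of regular and inverse rows in the invariance section already verifies this.

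Third, apply the Smith normal form counting proposition to $M=M_\tau(D)$, which has $m$ columns, rank $r_\tau$, and nonzero Smith entries $d_{\tau,i}$. This gives $n^{m-r_\tau}\prod_i\gcd(d_{\tau,i},n)$ solutions. Summing over $\tau$ yields the formula.

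The only delicate step is the second. There I must make sure the bijection is really with a full coloring of signature exactly $\tau$, and not with one of some other signature. This holds because the type part of the color is fixed by $\tau$ and is independent of $x$. I also need the hypothesis $n\geq3$, which is only used so that $D_n$ has the stated presentation and normal form $s^\epsilon r^x$ with $x\in\Z/n\Z$; the counting itself works verbatim for any $n\ge 1$. Finally, I would remark that by Theorem~6.3 the right-hand side depends only on $L$: each summand depends only on the reduced Smith data, because unit entries contribute $\gcd(1,n)=1$ and $m-r_\tau$ is the free rank.
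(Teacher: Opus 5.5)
Your proposal is correct and follows the same route as the paper. Both partition the colorings by component signature, identify each signature class with the solution set of $M_\tau(D)x\equiv 0\pmod n$ (equivalently $\HomZ(A_\tau(D),\Z/n\Z)$), count it with the Smith normal form proposition, and sum over the $2^\ell$ signatures; you spell out details that the paper's one-line proof leaves implicit, such as disjointness, the relation-oriented convention, and why unit Smith entries and $m-r_\tau$ make each term depend only on the reduced Smith data.
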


\begin{proof}
Each coloring determines a component signature, and gives the coloring set with $\HomZ(A_\tau(L),\Z/n\Z)$, and the Smith normal form gives the size of this set. Summing over the $2^\ell$ signatures gives the result.
\end{proof}

If more than one signature module is equivalent, and have the same nonunit Smith normal form, then contributions from that one signature module may be grouped together.

\begin{corollary}[Grouped form]
The distinct reduced Smith types are indexed by $j$. Let $a_j$ be the number of component signatures producing the $j$th type, let $\nu_j$ be its free rank, and let $e_{j,1},\ldots,e_{j,s_j}>1$ be its nonunit Smith entries. Then
\[
\left|\HomQnd\bigl(Q(L),\Conj(D_n)\bigr)\right|
=
\sum_j a_jn^{\nu_j}\prod_{i=1}^{s_j}\gcd(e_{j,i},n),
\]
where entries that only contain $1$ do not contribute anything to the final count.
\end{corollary}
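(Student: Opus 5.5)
The grouped form follows from the General SNF decomposition theorem by regrouping the sum over $\tau\in\{0,1\}^\ell$. I would first rewrite each summand so that it depends only on the reduced Smith data of $M_\tau(D)$, meaning the free rank $m-r_\tau$ and the nonunit Smith entries. Then I would collect the signatures that share the same reduced data.

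For the first step, fix $\tau$ with nonzero Smith entries $d_{\tau,1}\mid\cdots\mid d_{\tau,r_\tau}$. Whenever $d_{\tau,i}=1$ we have $\gcd(d_{\tau,i},n)=\gcd(1,n)=1$, so the unit entries can be dropped from the product:
\[
n^{m-r_\tau}\prod_{i=1}^{r_\tau}\gcd(d_{\tau,i},n)
=
n^{m-r_\tau}\prod_{d_{\tau,i}>1}\gcd(d_{\tau,i},n).
\]
The right-hand side depends only on $m-r_\tau$ and the multiset of entries $d_{\tau,i}>1$. By the discussion preceding the Signature-module invariance theorem and by that theorem itself, this data is exactly the isomorphism class of $A_\tau(L)\cong\Z^{m-r_\tau}\oplus\bigoplus_{d_{\tau,i}>1}\Z/d_{\tau,i}\Z$, and it is independent of the diagram. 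In particular, the count does not depend on how many unit entries appear, which can change under the moves that adjoin identity blocks.

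For the second step, I would define an equivalence relation on $\{0,1\}^\ell$: $\tau\approx\tau'$ when the reduced Smith data agree. Let the classes be indexed by $j$, with common free rank $\nu_j$ and common nonunit entries $e_{j,1},\ldots,e_{j,s_j}$, and set $a_j=|\{\tau:\tau\text{ in class }j\}|$. Every $\tau$ lies in exactly one class, so the sum over all $\tau$ splits as a sum over $j$ of sums over each class. Within a class all summands are equal to $n^{\nu_j}\prod_i\gcd(e_{j,i},n)$, so each inner sum equals $a_j$ times that common value, which gives the stated formula.

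No step here is a real obstacle. The only point that needs care is checking that the term is well defined on a class, that is, that it depends on nothing beyond the reduced data. This is exactly the observation that $\gcd(1,n)=1$, together with uniqueness of the nonunit invariant factors of a finitely generated abelian group.
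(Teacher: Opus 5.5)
Your proposal is correct and takes the same route as the paper, which derives the grouped form directly from the General SNF decomposition. The paper notes that unit Smith entries contribute $\gcd(1,n)=1$ and then collects the signatures $\tau$ with identical reduced Smith data into a single term with multiplicity $a_j$, exactly as you do.
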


The all-rotation signature contributes $n^\ell$, and the all-reflection signature gives the Fox $n$-colorings~\cite[Section~2.1]{kauffman}, while the remaining $2^\ell-2$ signatures are mixed.

\section{Mixed Rotation--Reflection Signatures Contain More Information Than Fox Colorings}

The all-reflection signature is only one part of the signature module. In this section, we show that the mixed rotation--reflection signatures can distinguish links even when the all-reflection modules are isomorphic. Thus, the full family of signature modules contains information that does not exist in Fox colorings.

We remember that a component signature is a vector
\[
\tau\in\{0,1\}^\ell,
\]
where a number $0$ is a rotation-colored component and a number $1$ is a reflection-colored component. When
\[
\tau=\mathbf{1}=(1,\ldots,1),
\]
every component is reflection colored. The crossing relations then just become the Fox coloring relations over the dihedral group~\cite[Section~2.1]{kauffman}
\[
2x_a\equiv x_b+x_c\pmod n.
\]
Consequently, the all-reflection module $A_{\mathbf{1}}(L)$ determines the number of Fox $n$-colorings for every $n$. The remaining modules $A_\tau(L)$ record the colorings in which rotations and reflections occur on different components.

\begin{proposition}
The full signature-module family
\[
\mathcal{A}(L)=\{A_\tau(L):\tau\in\{0,1\}^\ell\}
\]
contains more information than the all-reflection module $A_{\mathbf{1}}(L)$.
\end{proposition}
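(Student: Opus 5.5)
The plan is to prove the proposition by an explicit counterexample. I will exhibit two oriented, ordered two-component links $L$ and $L'$ with $A_{\mathbf{1}}(L)\cong A_{\mathbf{1}}(L')$ but $A_\tau(L)\not\cong A_\tau(L')$ for some mixed signature $\tau\in\{(0,1),(1,0)\}$. By Theorem~6.3 each $A_\tau$ is a link invariant, so such a pair shows that $\mathcal{A}(L)$ separates links that $A_{\mathbf{1}}$ does not. By the General SNF decomposition theorem, the difference will also appear in $|\HomQnd(Q(L),\Conj(D_n))|$ for a suitable $n$. This gives a concrete example where the dihedral counting invariant beats the Fox counts.

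\textbf{What the mixed modules measure.} Before choosing the pair, I will record the shape of a mixed module for $\tau=(0,1)$, using Equation~\eqref{eq:oriented-crossing}.
\begin{itemize}
\item[(i)] At a crossing with under-arcs on the rotation component $L_1$, the relation is $x_t=\pm x_s$. The sign is $-$ exactly when the over-arc lies on the reflection component $L_2$. Hence all arcs of $L_1$ collapse to a single generator $x$, subject to $2x=0$ as soon as $L_2$ passes over $L_1$ at least once. (Going once around $L_1$ gives $x=(-1)^{k}x$, where $k$ is the number of such crossings.)
\item[(ii)] At a crossing with under-arcs on the reflection component $L_2$, the relations are $x_t=\pm x_s+2\epsilon x_a$. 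Over-arcs from $L_1$ contribute $x_t=x_s+2x$, and self-crossings of $L_2$ contribute Fox relations.
\end{itemize}
So $A_{(0,1)}(L)$ is essentially the Fox-type module of $L_2$ alone, twisted by a $\Z/2$- or $\Z$-valued generator $x$ coming from $L_1$. The coupling between the two is governed by the sign data of the crossings between $L_1$ and $L_2$. I expect this to be controlled by parity information, such as the linking number modulo $2$ and the determinant of $L_2$, that $A_{\mathbf{1}}(L)$ mixes together.

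\textbf{Choosing the pair.} I will look for two links whose all-reflection modules agree but whose individual components differ.
\begin{itemize}
\item A first attempt is to compare a split link $K\sqcup U$ (a nontrivial knot $K$ and an unknot $U$) with a non-split link whose Fox module coincides. Computing $A_{(0,1)}$ and $A_{(1,0)}$ with the roles of the components swapped is immediate in the split case. There the mixed modules are $\Z\oplus A_{\mathbf{1}}(K)$-type and $\Z^2$-type.
\item Failing that, I will tabulate $A_{\mathbf{1}}$ for small two-component links (the Hopf link, $T(2,4)$, the Whitehead link, and $T(2,6)$) together with split unions of small knots and unknots. For each candidate pair with matching $A_{\mathbf{1}}$, I will compute the $2\times 2$ or $3\times 3$ mixed matrices $M_\tau$.
\end{itemize}

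\textbf{Finishing the argument.} For the chosen pair, I will:
\begin{enumerate}
\item write the diagrams with arc labels;
\item build $M_{\mathbf{1}}$ and both mixed $M_\tau$ from Equation~\eqref{eq:oriented-crossing};
\item reduce each matrix to Smith form, and check that the reduced Smith data of $M_{\mathbf{1}}$ agree while some mixed reduced Smith data differ (different free rank or different nonunit entries).
\end{enumerate}
The Smith-form computations themselves are routine.

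\textbf{Main obstacle.} The hardest step is finding a pair with genuinely equal $A_{\mathbf{1}}$, not merely equal determinant. Two issues must be handled:
\begin{itemize}
\item The free rank and the full torsion of $A_{\mathbf{1}}$ must match, so I must check complete reduced Smith data rather than just counts.
\item The comparison must also survive reordering the components. Otherwise a difference in $A_{(0,1)}$ might only reflect a swapped labeling. To rule this out, I will compare the unordered multiset $\{A_{(0,1)},A_{(1,0)}\}$, so the conclusion does not depend on the component ordering.
\end{itemize}
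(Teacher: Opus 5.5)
\paragraph{Verdict.} There is a genuine gap: your proposal never exhibits the pair of links on which the whole proof depends, and the candidates you do name cannot supply one.

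\paragraph{What is missing.} You are right that the statement must be proved with an explicit pair of links, and that is what the paper does. For the three-component links L6n1 and L9n21 it finds $A_{(1,1,1)}\cong\Z\oplus\Z/2\Z\oplus\Z/2\Z$ for both. However, $A_{(0,0,1)}$ is $\Z\oplus(\Z/2\Z)^2$ for L6n1 but $\Z\oplus\Z/2\Z\oplus\Z/6\Z$ for L9n21. All three single-reflection modules of L6n1 coincide, so reordering components cannot remove the difference, and the $\Conj(D_3)$ counts are $66$ and $72$. Everything from ``Choosing the pair'' onward in your proposal is a search plan. The proposition has no content beyond the example and its Smith-form check, so this is the missing step.

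\paragraph{Why your named candidates fail.} The Hopf link, $T(2,4)$, $T(2,6)$ and the Whitehead link have pairwise non-isomorphic Fox modules $\Z\oplus\Z/2\Z$, $\Z\oplus\Z/4\Z$, $\Z\oplus\Z/6\Z$, $\Z\oplus\Z/8\Z$, all of free rank $1$. By contrast, $A_{\mathbf 1}(K\sqcup U)\cong A_{\mathbf 1}(K)\oplus\Z$ has free rank $2$. So your ``first attempt'' would need a non-split two-component link of determinant $0$, which you have not identified or computed with. Comparing $K\sqcup U$ with $K'\sqcup U$ can never work either. Since $M_\tau$ of a split diagram is block diagonal, with $K$ listed first you get $A_{(0,0)}\cong A_{(0,1)}\cong\Z^2$ and $A_{(1,0)}\cong A_{(1,1)}$. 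For such links the whole family is therefore determined by $A_{\mathbf 1}$.

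\paragraph{How to repair the two-component route.} Put the torsion on the components differently. Use $A_0(K)\cong\Z$ and $A_\tau(K_1\sqcup K_2)\cong A_{\tau_1}(K_1)\oplus A_{\tau_2}(K_2)$, and compare $3_1\sqcup 3_1$ with $(3_1\#3_1)\sqcup U$.
\begin{itemize}
\item Both have $A_{\mathbf 1}\cong\Z^2\oplus(\Z/3\Z)^2$.
\item Their mixed modules are $\{\Z^2\oplus\Z/3\Z,\ \Z^2\oplus\Z/3\Z\}$ versus $\{\Z^2,\ \Z^2\oplus(\Z/3\Z)^2\}$, which differ even as unordered multisets.
\item Their $\Conj(D_3)$ counts are $12^2=144$ and $30\cdot 6=180$.
\end{itemize}
The paper's example has the extra merit of being non-split.

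\paragraph{Two smaller points.} In item (i), the relation $2x=0$ is imposed only when the number $k$ of undercrossings of $L_1$ beneath $L_2$ is odd, equivalently when the linking number is odd. It is not imposed as soon as $k\geq 1$, as your own parenthetical shows. Also, a difference in a single $A_\tau$ does not by itself force the total $\Conj(D_n)$ count to differ, because the count sums over all $\tau$. That claim has to be checked in whatever example you choose.
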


\begin{proof}
We look at the three-component links
\[
L_6=L6n1=6^3_3
\qquad\text{and}\qquad
L_9=L9n21=9^3_{17}.
\]
Diagrams of the two links are shown in Figure~\ref{fig:comparison-links}. The planar diagram (PD) data for these links were obtained from The Knot Atlas~\cite{knotatlas}.

\setcounter{figure}{5}
\begin{figure}[H]
\centering
\begin{minipage}[t]{0.42\textwidth}
\centering
\includegraphics[width=0.68\linewidth]{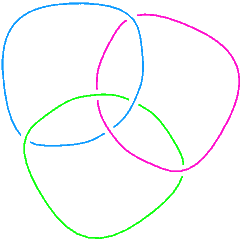}

\smallskip
(a) $L_6=L6n1$
\end{minipage}
\hfill
\begin{minipage}[t]{0.42\textwidth}
\centering
\includegraphics[width=0.68\linewidth]{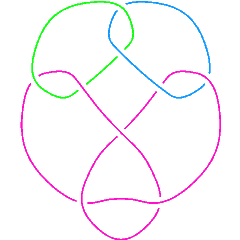}

\smallskip
(b) $L_9=L9n21$
\end{minipage}
\caption{The standard Knot Atlas diagrams of the named three-component links used in the comparison.}
\label{fig:comparison-links}
\end{figure}

These are the standard Knotscape link projections that appear in the Knot Atlas entry for L6n1 and the Knot Atlas entry for L9n21~\cite{knotatlas}.

Their signature modules are shown below.
\[
\begin{array}{c|c|c}
\tau&A_\tau(L_6)&A_\tau(L_9)\\
\hline
(0,0,0)&\Z^3&\Z^3\\
(0,0,1)&\Z\oplus\Z/2\Z\oplus\Z/2\Z&\Z\oplus\Z/2\Z\oplus\Z/6\Z\\
(0,1,0)&\Z\oplus\Z/2\Z\oplus\Z/2\Z&\Z\oplus\Z/2\Z\oplus\Z/2\Z\\
(0,1,1)&\Z^2\oplus\Z/2\Z&\Z^2\oplus\Z/2\Z\\
(1,0,0)&\Z\oplus\Z/2\Z\oplus\Z/2\Z&\Z\oplus\Z/2\Z\oplus\Z/2\Z\\
(1,0,1)&\Z^2\oplus\Z/2\Z&\Z^2\oplus\Z/2\Z\\
(1,1,0)&\Z^2\oplus\Z/2\Z&\Z^2\oplus\Z/2\Z\\
(1,1,1)&\Z\oplus\Z/2\Z\oplus\Z/2\Z&\Z\oplus\Z/2\Z\oplus\Z/2\Z
\end{array}
\]

In particular, their all-reflection modules are isomorphic:
\[
A_{(1,1,1)}(L_6)\cong A_{(1,1,1)}(L_9)
\cong\Z\oplus\Z/2\Z\oplus\Z/2\Z.
\]
Therefore, $L_6$ and $L_9$ have the same number of Fox $n$-colorings for every $n$. However, their mixed modules at $\tau=(0,0,1)$ are not isomorphic, since
\[
A_{(0,0,1)}(L_6)\cong\Z\oplus\Z/2\Z\oplus\Z/2\Z,
\]
whereas
\[
A_{(0,0,1)}(L_9)\cong\Z\oplus\Z/2\Z\oplus\Z/6\Z.
\]
All three of the signatures of $L_6$ containing exactly one reflection have the same module, while $L_9$ has one such signature containing a $\Z/6\Z$ factor. This means that the complete signature-module families of the two links are different, even though their all-reflection modules and linking matrices are the same. In particular, both links have three Fox $3$-colorings, while their full $\operatorname{Conj}(D_3)$-coloring counts
are $66$ and $72$, respectively.
\end{proof}

\section{Relation with the Multivariable Alexander Module}
\label{sec:alexander-specialization}

The two signatures that only contain either all reflections or all rotations suggest a connection with the Alexander Module~\cite[Definition~2 and Proposition~6]{traldi2020}~\cite[Definitions~4, 6, and~7]{traldi2025}. If every component is a rotation, then Equation~\eqref{eq:local-relation} becomes
\[
x_c=x_b,
\]
which is simply the trivial-quandle relation. If every component is a reflection, it then becomes
\[
x_c=2x_a-x_b,
\]
the Fox coloring relation. These are simply the $t=1$ and $t=-1$ specializations of the Alexander-quandle relation~\cite[Proposition~6]{traldi2020}
\[
x_c=tx_b+(1-t)x_a.
\]
The $t=-1$ case and its connection with Fox colorings are also described by Kauffman and Lopes~\cite[Section~2.1]{kauffman}.

The reflection-rotation colorings also follow this pattern, where the pure all-reflection signature gives the Fox coloring relation and has $\epsilon=\delta=1$. On the other hand, the pure all-rotation signature gives the trivial-quandle relation and has $\epsilon=\delta=0$. This is very similar to the Alexander coloring where $+1$ gives the trivial quandle, and $-1$ gives Fox colorings. We can then try to make this distinction in the Alexander module by observing what happens when
\[
t_k=(-1)^{\tau_k}
\]
for each component $L_k$.

Since we already know what happens for $t_k=+1$ and $t_k=-1$, we must see if the mixed signatures also match with the Alexander module. Suppose that the under-arcs $b,c$ lie on $L_i$ and the over-arc $a$ lies on $L_j$. In the notation of Traldi's Figure~1, \(a_1=a\) is the over-arc, \(a_2=b\) is the source under-arc, and \(a_3=c\) is the target under-arc. Thus \(\kappa(a_2)=i\) and \(\kappa(a_1)=j\). The corresponding relation in the Alexander module is then~\cite[Definitions~3 and 4]{traldi2018}
\begin{equation}
(1-t_i)e_a+t_je_b-e_c=0.
\label{eq:alexander-signature}
\end{equation}
After setting $t_k=(-1)^{\tau_k}$, this becomes
\[
\bigl(1-(-1)^{\tau_i}\bigr)e_a
+(-1)^{\tau_j}e_b-e_c=0.
\]
Because $1-(-1)^{\tau_i}=2\tau_i$, and because $\epsilon=\tau_i$ and $\delta=\tau_j$ at this crossing, the relation is
\[
2\epsilon e_a+(-1)^\delta e_b-e_c=0,
\]
which, from before, is the integer crossing relation that is represented by the corresponding row of $M_\tau(D)$.

\begin{theorem}
For every component signature $\tau$, the signature module $A_\tau(L)$ is obtained from the multivariable Alexander module $M_A(L)$ by setting
\[
t_k=(-1)^{\tau_k}
\]
on each component \(L_k\). Also, if
\[
\phi_\tau(t_k)=(-1)^{\tau_k},
\]
then~\cite[Lemma~11]{traldi2018}
\[
A_\tau(L)\cong
\Z_{\phi_\tau}\otimes_{\Lambda_\ell}M_A(L).
\]
\end{theorem}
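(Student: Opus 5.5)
The plan is to work at the level of presentations and then use right-exactness of the tensor product. Fix a diagram $D$ with $m$ arcs and $k$ crossings, and let $\Lambda_\ell=\Z[t_1^{\pm1},\ldots,t_\ell^{\pm1}]$. Following \cite[Definitions~3 and~4]{traldi2018}, the module $M_A(L)$ is the cokernel of a presentation
\[
\Lambda_\ell^{k}\xrightarrow{\;P(D)\;}\Lambda_\ell^{m}\longrightarrow M_A(L)\longrightarrow0,
\]
written with row vectors and the relations as rows. The free generators are the arcs $e_a$, and the row of crossing $c$ is the relation \eqref{eq:alexander-signature}, that is, $(1-t_i)e_a+t_je_b-e_c$. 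The map $\phi_\tau:\Lambda_\ell\to\Z$, $t_k\mapsto(-1)^{\tau_k}$, is a well-defined ring homomorphism because $(-1)^{\tau_k}$ is a unit of $\Z$. This makes $\Z_{\phi_\tau}$ a $\Lambda_\ell$-module.

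First, apply $\Z_{\phi_\tau}\otimes_{\Lambda_\ell}(-)$ to the presentation. Tensor product is right exact, so we get an exact sequence
\[
\Z^{k}\xrightarrow{\;\phi_\tau(P(D))\;}\Z^{m}\longrightarrow\Z_{\phi_\tau}\otimes_{\Lambda_\ell}M_A(L)\longrightarrow0.
\]
This uses $\Z_{\phi_\tau}\otimes\Lambda_\ell^{m}\cong\Z^m$ and the fact that the induced map is the entrywise specialization $\phi_\tau(P(D))$; this is the content of \cite[Lemma~11]{traldi2018}. Second, identify $\phi_\tau(P(D))$ with $M_\tau(D)$ row by row. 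The computation immediately preceding the theorem does exactly this. Using $1-(-1)^{\tau_i}=2\tau_i$ together with $\epsilon=\tau_i$ and $\delta=\tau_j$, the specialized row is $2\epsilon e_a+(-1)^\delta e_b-e_c$, which is \eqref{eq:universal-crossing}, and hence \eqref{eq:oriented-crossing}. It follows that the cokernel is $\Z^m/\rowspan_\Z(M_\tau(D))=A_\tau(D)$ by \eqref{eq:signature-module}.

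Third, pass from $D$ to $L$. Theorem~6.3 shows that $A_\tau(D)$ depends only on $L$ up to isomorphism. The module $M_A(L)$ is also a link invariant up to isomorphism, by Traldi. Therefore the isomorphism $A_\tau(D)\cong\Z_{\phi_\tau}\otimes M_A(D)$ descends to $A_\tau(L)\cong\Z_{\phi_\tau}\otimes_{\Lambda_\ell}M_A(L)$. The first sentence of the theorem, that $A_\tau(L)$ is "obtained by setting $t_k=(-1)^{\tau_k}$", is the same statement read at the level of presentation matrices.

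I expect the main obstacle to be the bookkeeping of conventions in the second step rather than the algebra. Traldi's crossing relation must be matched with the paper's relation-oriented source and target convention. In particular, the inverse crossings of the $(0,1)$ mixed case must use the same source and target assignment in both settings, and the subscript roles must match: the paper's $\epsilon=\tau_i$ is the under-strand type, which multiplies $e_a$, while $\delta=\tau_j$ is the over-strand type, which multiplies $e_b$. I would first check a positive and a negative crossing against Traldi's Figure~1 directly. If Traldi's convention is instead the one in which $t_j$ and $(1-t_i)$ have their roles swapped, or a sign differs, I would fix it by row multiplication by $\pm1$ or by the units $t_j^{\pm1}$. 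Any such rescaling specializes to $\pm1$ under $\phi_\tau$, so it does not change the cokernel.
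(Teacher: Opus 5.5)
Your proposal is correct and follows the paper's proof. You identify the $\phi_\tau$-specialized Alexander presentation with $M_\tau(D)$ row by row via $1-(-1)^{\tau_i}=2\tau_i$, then pass from $D$ to $L$ using Theorem~6.3, and you make explicit the right-exactness step that the paper delegates to \cite[Lemma~11]{traldi2018}.

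One caveat concerns your fallback. A genuine swap of the roles of $t_j$ and $1-t_i$ could not be repaired by rescaling rows by units: for $(\delta,\epsilon)=(0,1)$ the specialized rows would be $-e_b-e_c$ versus $2e_a+e_b-e_c$. This contingency does not arise, however. In the relation \eqref{eq:alexander-signature} that you work with, the over-arc coefficient is governed by the under-component and vice versa, exactly as the Fox calculus of a Wirtinger relator requires.
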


\begin{proof}
Both presentations use the arcs as generators. The calculation that was done above shows that every Alexander crossing relation becomes the relation that is defined by the corresponding row of $M_\tau(D)$. Therefore the specialized Alexander presentation and the presentation of $A_\tau(D)$ are the same. The result then follows from Theorem~6.3.
\end{proof}

\section{Generalization to Generalized Dihedral Groups}

Throughout this paper, we have used the dihedral groups of regular $n$-gons. However, this restriction is not necessary. In Section~\ref{sec:dihedral-group}, we used the reflection and rotation nature of the dihedral group. However, we did not use the fact that the rotation subgroup is cyclic.

All generalized dihedral groups have the property of rotations and reflections; however, their rotation subgroups need not be cyclic. Therefore, because the quandle equations derived in Section~\ref{sec:dihedral-group} did not use this cyclicity, the same derivation applies to all generalized dihedral groups.

For an abelian group $B$, let
\[
\operatorname{Dih}(B)=B\rtimes\Z/2\Z,
\]
where the nontrivial element of $\Z/2\Z$ acts on $B$ by $x\mapsto-x$~\cite[Sections~2.3.1--2.3.2]{Daugulis2017}.

\begin{theorem}[Generalized-dihedral extension]
Let $L$ be an oriented ordered link and let $B$ be an abelian group. For every component signature $\tau$, the $\Conj(\operatorname{Dih}(B))$-colorings of $L$ with signature $\tau$ are in bijection with
\[
\HomZ\bigl(A_\tau(L),B\bigr).
\]
Therefore,
\[
\HomQnd\bigl(Q(L),\Conj(\operatorname{Dih}(B))\bigr)
\cong
\bigsqcup_{\tau\in\{0,1\}^{\ell}}
\HomZ\bigl(A_\tau(L),B\bigr).
\]
\end{theorem}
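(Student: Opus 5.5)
I would follow the dihedral case step by step, writing $B$ additively and never using cyclicity. First I would write every element of $\operatorname{Dih}(B)$ uniquely as $(\epsilon,x)$ with $\epsilon\in\{0,1\}$ and $x\in B$. Here $(0,x)$ is a "rotation" and $(1,x)$ is a "reflection". The multiplication is $(\epsilon,x)(\delta,y)=(\epsilon+\delta,(-1)^\delta x+y)$, which is the convention matching $r^isr^j=sr^{j-i}$. With this rule I would recompute the conjugation $y^{-1}xy$ directly and obtain
\[
(\epsilon,x)\triangleright(\delta,y)=\bigl(\epsilon,(-1)^\delta x+2\epsilon y\bigr),
\]
which is Equation~\eqref{eq:dihedral-action} with $B$ in place of $\Z/n\Z$. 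This computation is the only place where the group structure enters. I expect it to be the main point needing care: the sign conventions for the semidirect product must match the paper's, and the inverse $(1,y)^{-1}=(1,y)$ must be used. The identity uses only that $B$ is abelian and that inversion is an automorphism. It does not use cyclicity or finiteness, and it does not need $2$ to be invertible.

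Next I would repeat the argument of the component-signature proposition. The projection $(\epsilon,x)\mapsto\epsilon$ is a quandle homomorphism to $T_2$, so the type is constant along each component. This splits $\HomQnd(Q(L),\Conj(\operatorname{Dih}(B)))$ as a disjoint union over $\tau\in\{0,1\}^\ell$. Fixing $\tau$ and a diagram $D$, a coloring of signature $\tau$ is then the same as an assignment $x\colon A(D)\to B$ satisfying, at each crossing, the relation \eqref{eq:universal-crossing} (equivalently \eqref{eq:oriented-crossing}) with values in $B$. The forward/inverse bookkeeping carries over verbatim, since the inverse-operation computation used only $u^2=1$.

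Finally I would invoke the universal property of the presentation \eqref{eq:signature-module}. A function $A(D)\to B$ is the same as a homomorphism $F(D)\to B$. Such a homomorphism kills $R_\tau(D)$ exactly when every integer row of $M_\tau(D)$ evaluates to $0$ in $B$, and this is exactly the set of crossing conditions. So the colorings of signature $\tau$ are in bijection with $\HomZ(A_\tau(D),B)$. By the signature-module invariance theorem, $A_\tau(D)\cong A_\tau(L)$, and an isomorphism induces a bijection of Hom sets. Taking the disjoint union over $\tau$ gives the displayed decomposition. I would also remark that the bijection for a fixed diagram is canonical, while the identification with $A_\tau(L)$ depends on the chosen isomorphism. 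Only the cardinality, or the bijection up to relabeling, is claimed.
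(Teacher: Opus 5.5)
Your proposal is correct and follows the same route as the paper. The conjugation formula $(\epsilon,x)\triangleright(\delta,y)=(\epsilon,(-1)^\delta x+2\epsilon y)$ holds verbatim in $\operatorname{Dih}(B)$, so each crossing imposes exactly the corresponding row of $M_\tau(D)$ read in $B$; colorings of signature $\tau$ are then the homomorphisms $A_\tau\to B$, and you take the disjoint union over $\tau$. Your version only adds the steps the paper leaves implicit: the direct recomputation of the conjugation in $\operatorname{Dih}(B)$, the constancy of type via the projection to $T_2$, and the appeal to the invariance theorem to replace $A_\tau(D)$ by $A_\tau(L)$.
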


\begin{proof}
The calculation that was used to obtain the crossing equations only uses addition, additive inverses, and multiplication by $2$. These operations are defined in every abelian group $B$. Therefore, for an under-arc of type $\epsilon$ and an over-arc of type $\delta$, the conjugation calculation that was used before gives
\[
x_c=(-1)^\delta x_b+2\epsilon x_a.
\]
Equivalently,
\[
2\epsilon x_a+(-1)^\delta x_b-x_c=0.
\]
This is the same as the integer crossing relation represented by the corresponding row of $M_\tau(D)$, now interpreted in $B$.

Thus, an assignment of elements of $B$ to the arcs is a coloring with signature $\tau$ if and only if it satisfies the relations which define $A_\tau(L)$. These assignments are therefore the homomorphisms
\[
A_\tau(L)\longrightarrow B.
\]
Finally, every coloring has exactly one component signature, so taking the disjoint union over all signatures gives us the result.
\end{proof}

\section{Conclusion and Further Directions}

We use the component signatures for a quandle to turn the crossing relations into linear equations in a matrix and prove that the resulting reduced Smith data are equivalent under Reidemeister moves and ambient isotopy, and therefore are independent of the chosen diagram. Their reduced Smith data determine the $\Conj(D_n)$ counting invariant for every $n$ at once.

We were not able to determine the enhanced counting polynomial for a quandle~\cite[Definition~7]{navasnelson2008} based off of the reduced Smith data. Additional information about the link may be required in order to compute the enhanced counting polynomial. We plan to understand if additional information may be necessary, and what additional information would be enough to determine the counting polynomial.

\end{document}